\documentclass[12pt,oneside]{amsart}
\usepackage[margin=1in]{geometry}

\usepackage{graphicx}
\usepackage{amssymb}
\usepackage{tikz}
\DeclareGraphicsRule{.tif}{png}{.png}{`convert #1 `dirname #1`/`basename #1 .tif`.png}
\usepackage{amsmath,amsthm,amscd,amssymb}
\usepackage{latexsym}
\usepackage[colorlinks,citecolor=red,pagebackref,hypertexnames=false]{hyperref}
\hypersetup{%
  colorlinks = true,
  citecolor=red,
  linkcolor  = black
}

\usepackage{ulem}
\usepackage{soul}
\numberwithin{equation}{section}
\theoremstyle{plain}
\newtheorem{theorem}{Theorem}[section]
\newtheorem{lemma}[theorem]{Lemma}
\newtheorem{corollary}[theorem]{Corollary}
\newtheorem{proposition}[theorem]{Proposition}

\theoremstyle{definition}
\newtheorem{definition}[theorem]{Definition}

\newtheorem{problem}[theorem]{Problem}
\newtheorem{example}[theorem]{Example}

\theoremstyle{remark}
\newtheorem{remark}[theorem]{Remark}

\newtheorem{case[theorem]}{Case}

\def\R{\mathbb R}

\date{\today}      
\author{A. Iosevich, A. Mayeli, and E. Wyman} 
\address{Department of Mathematics, University of Rochester, Rochester, NY}
\email{iosevich@gmail.com}
\address{Department of Mathematics, City University of New York, New York, NY}
\email{azitamayeli@gmail.com} 
\address{Department of Mathematics, University of Binghamton, Binghamton, NY}
\email{emmett.Wyman@gmail.com}
\thanks{A.I. was supported in part by the National Science Foundation grant DMS - 2154232. A.M. was supported in part by AMS-Simons Research Enhancement Grant and the PSC-CUNY research grants. E.W. was supported in part by the National Science Foundation under grant no. DMS-2422900.}
\thanks{The authors wish to thank B. Kashin for telling them about the reference \cite{GMPT08}. }

\begin{document}

\title[Fourier Uncertainty principles on Riemannian manifolds]{Fourier Uncertainty Principles on Riemannian manifolds}

\begin{abstract} The purpose of this paper is to develop a Fourier uncertainty principle on compact Riemannian manifolds and contrast the underlying ideas with those arising in the setting of locally compact abelian groups. The key obstacle is the growth of eigenfunctions, and connections to Bourgain's celebrated $\Lambda_q$ theorem are discussed in this context. 
\end{abstract}  

\maketitle



\section{Introduction} 
The classical uncertainty principle in Euclidean space says that if a suitably regular function is supported on a set $E \subset {\mathbb R}^d$ and its Fourier transform is concentrated on a set $S \subset {\mathbb R}^d$, then $|E| \cdot |S| \ge c>0$, where $| \cdot |$ denotes the $d$-dimensional Lebesgue measure. See, for example, \cite{DS89} and the references contained therein. The main thrust of this paper is to establish the uncertainty principle and exact recovery mechanisms on Riemannian manifolds, though in the process we shall also establish new results in the well-traveled terrain of locally compact abelian groups. See, for example, \cite{Smith90}, \cite{HJ94} for classical results in that setting. 

\vskip.125in 

\subsection{The uncertainty principle on locally compact abelian groups} While the main thrust of this paper is to study the uncertainty principle on Riemannian manifolds, we set the table by describing this problem in the setting of locally compact abelian groups. Indeed, by Pontryagin duality, if $G$ is a locally compact abelian group, then for the  Haar measure $\mu$ on $G$ (which is unique up to a positive constant), there exists a unique Haar measure $\nu$ on $\widehat{G}$ such that if $f \in L^1(G)$, $\widehat{f} \in L^1(\widehat{G})$, then 
\begin{equation} \label{groupfourierinversion} f(x)=\int_{\widehat{G}} \widehat{f}(\chi) \chi(x) d\nu(\chi), \end{equation} where 
$$ \widehat{f}(\chi)=\int_G f(x) \overline{\chi(x)} d\mu(x),$$ and $\chi\in \widehat G$ is the character\footnote{Recall that a character $\chi$ in $\widehat G$ is a continues group homomorphism function $\chi: G\to S^1$, $S^1$ the unit circle in $\mathbb C$.} on $G$. Note that the equality in \eqref{groupfourierinversion} holds $\mu$-almost everywhere if $f \in L^1(G)$ and point-wise if $f$ is also continuous. The definition of the Fourier transform for functions $f\in L^2(G)$ is done in standard manner, by extending the definition from $L^1(G)\cap L^2(G)$ to $L^2(G)$.  

\smallskip

Recall that we say $f\in L^2(G)$ is supported in a set $E\subset G$ if $f(x)=0$ for $\mu$-almost every $x\in G\setminus E$. In the same fashion, we say $\widehat f$ is supported in $S\subset \widehat G$ if $\widehat f(\xi)=0$ for $\nu$-almost $\xi\in \widehat G\setminus S$. 

\smallskip

Let $E \subset G$ with $0<\mu(E)<\infty$ and $S \subset \widehat{G}$  with $0<\nu(S)<\infty$. 
  Let $f\in L^2(G)$  be supported on $E \subset G$ and $\widehat{f}$ is supported on $S \subset \widehat{G}$, then for $\mu$-almost $x\in E$,  by the Cauchy-Schwarz inequality we have 
\begin{equation} \label{groupUPsetup}  {|f(x)|}^2={\left| \int_S \widehat{f}(\chi) \chi(x) d\nu(\chi) \right|}^2  
  \leq \nu(S) \cdot \int_S {|\widehat{f}(\chi)|}^2 d\nu(\chi). 
\end{equation}

Integrating both sides of (\ref{groupUPsetup}) over $E$ (where $f$ is assumed to be supported on $E$), we see that 
$$ {||f||}^2_{L^2(E)} \leq \nu(S) \cdot \mu(E) \cdot \int_S {|\widehat{f}(\chi)|}^2 d\nu(\chi) = \nu(S) \cdot \mu(E) \cdot {\|f\|}^2_{L^2(E)}$$ by the Plancherel identity.
From this  inequality  we conclude that   the following variant  of uncertainty principle: 
\begin{equation} \label{UPforLCA} \mu(E) \cdot \nu(S) \ge 1. \end{equation} 

The assumption that $f$ is supported in $E$ and $\widehat{f}$ is supported in $S$ can easily be relaxed  and replaced by the assumption that $f, \widehat{f}$ are concentrated in those sets in a suitable sense. The following definition will be used throughout the paper in different settings.

\begin{definition}\label{def: L^p concentrated} If $X$ is a measure space with measure $\mu$ and $E \subset X$, we say that $f\in L^p(X,\mu)$ is $L^p$-concentrated on $E$ at level $L \ge 1$ if 
\begin{equation} \label{Level-concenteration} {\left( \int_X {|f(x)|}^p d\mu(x) \right)}^{\frac{1}{p}} \leq L {\left( \int_E {|f(x)|}^p d\mu(x) \right)}^{\frac{1}{p}}. \end{equation} 
\end{definition} 

It is not difficult to see that (\ref{Level-concenteration}) is equivalent to the condition 
\begin{equation} \label{Level-concenteration-equivalent} {\|f-1_Ef\|}_{L^p(X)} \leq \epsilon {\|f\|}_{L^p(X)}\end{equation} for some $0 \leq \epsilon<1$. Indeed (\ref{Level-concenteration}) holds with $L=\frac{1}{{(1-\epsilon^p)}^{\frac{1}{p}}}$ if and only if (\ref{Level-concenteration-equivalent}) holds. 

Here, $1_E$ is the indicator of the set $E$ and is defined by $1_E(x)=1$ when $x\in E$ and $0$ otherwise.  
Note that any measurable function in $L^p$ that is supported on a measurable set is automatically $L^p$-concentrated on the set at level $L=1$. This explains why concentration can be seen as a relaxed version of support. 

\vskip.12in 
If $\widehat{f}$ is supported in a generic set (see Remark \ref{remarkgeneric}), the situation improves significantly. To see this, we recall the following result due to  Bourgain (\cite{Bourgain89}). 

\begin{theorem} \label{bourgaintheorem} Let $G$ be a locally compact abelian group and let $\Psi=(\psi_1, \dots, \psi_n)$ denote a sequence of $n$ mutually orthogonal functions, with ${\|\psi_i\|}_{L^{\infty}(G)} \leq 1$. 
Then for any $q>2$, 
there exists an index  subset $S$ of $\{1,2, \dots, n\}$, with size  $|S|>n^{\frac{2}{q}}$,  such that 
$$ {\left\| \sum_{i \in S} a_i \psi_i \right\|}_{L^q(G)} \leq C(q) {\left( \sum_{i \in S} {|a_i|}^2 \right)}^{\frac{1}{2}}.$$ 

The constant $C(q)$ depends only on $q$ and the estimate above holds for a generic index set of size $\lceil n^{\frac{2}{q}} \rceil$, where $\lceil x\rceil$ denotes the smallest integer greater than $x$. 
\end{theorem}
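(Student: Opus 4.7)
The plan is a probabilistic selection argument. Set $\delta = n^{2/q - 1}$ and let $\xi_1, \dots, \xi_n$ be independent Bernoulli variables with $\mathbb{P}(\xi_i = 1) = \delta$; the random set $S = \{i : \xi_i = 1\}$ has expected size $\delta n = n^{2/q}$, and by a Chernoff estimate $|S| \geq \tfrac{1}{2} n^{2/q}$ with high probability. It therefore suffices to show that, with positive probability, the random operator $T_S a = \sum_{i \in S} a_i \psi_i$ satisfies $\|T_S\|_{\ell^2(S) \to L^q(G)} \leq C(q)$. Duality recasts this as the uniform bound
$$\sup_{g \in B_{q'}} \Big( \sum_{i=1}^n \xi_i\, |\langle g, \psi_i \rangle|^2 \Big)^{1/2} \leq C(q),$$
where $B_{q'}$ is the unit ball of $L^{q'}(G)$ and $q' = q/(q-1)$.

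Two deterministic ingredients feed the analysis, operating at different scales. First, orthogonality yields Bessel's inequality $\sum_i |\langle g, \psi_i \rangle|^2 \leq \|g\|_{L^2}^2$, controlling the $\ell^2$ size of the coefficient sequence. Second, $\|\psi_i\|_{L^\infty} \leq 1$ combined with H\"older gives a pointwise bound on $|\langle g, \psi_i \rangle|$ in terms of $\|g\|_{L^{q'}}$, tailoring each coefficient individually. My plan is to decompose each $g \in B_{q'}$ dyadically according to the magnitude of $|\langle g, \psi_i \rangle|$, applying Bessel on the small-coefficient pieces and the pointwise bound on the few large coefficients, producing the two-scale decomposition central to the iteration.

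Having isolated the scales, I would symmetrize the Bernoulli process and apply a chaining argument---Dudley's entropy integral, sharpened in the critical range by Talagrand's majorizing-measures theorem---to bound the expected supremum of $Z(g) = \sum_i (\xi_i - \delta)\, |\langle g, \psi_i \rangle|^2$ over $B_{q'}$. The main obstacle is to extract the correct density $\delta = n^{2/q-1}$: naive entropy estimates produce only weaker thresholds. This is precisely where Bourgain's combinatorial selection lemma enters, partitioning the dyadic levels and iterating the selection in the style of Pisier so that the exponent $2/q$ emerges exactly. To promote ``positive probability'' to ``generic index set of size $\lceil n^{2/q} \rceil$'', one performs the selection on a uniformly chosen subset of that size and controls the failure events by a standard union-bound / concentration refinement of the probabilistic argument.
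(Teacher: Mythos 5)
First, note that the paper does not prove this statement at all: Theorem \ref{bourgaintheorem} is quoted from Bourgain's paper \cite{Bourgain89} (with the genericity refinement described in Remark \ref{remarkgeneric}), so there is no internal proof to compare yours against. Judged on its own terms, your proposal correctly sets up the standard framework --- random selectors of density $\delta = n^{2/q-1}$, the dual reformulation $\|T_S\|_{\ell^2(S)\to L^q}^2 = \sup_{g\in B_{q'}}\sum_{i\in S}|\langle g,\psi_i\rangle|^2$, and the two deterministic inputs (Bessel from orthogonality, the pointwise bound from $\|\psi_i\|_\infty\le 1$). But everything after that is a list of names rather than an argument. The entire difficulty of the theorem is the estimate $\mathbb{E}\,\sup_{g\in B_{q'}}\sum_i(\xi_i-\delta)\,|\langle g,\psi_i\rangle|^2 \le C(q)$; writing that you would ``apply a chaining argument,'' ``sharpened by Talagrand's majorizing-measures theorem,'' and that ``this is precisely where Bourgain's combinatorial selection lemma enters'' is to invoke the proof of the theorem as a black box inside your proof of the theorem. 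You give no entropy estimate for $B_{q'}$ in the relevant metric, no statement of the selection lemma, no account of how the dyadic decomposition of the coefficients $|\langle g,\psi_i\rangle|$ interacts with the random selectors to produce a convergent iteration, and no indication of why $2/q$ is the exact threshold for the density. That is the whole theorem, not a routine verification, and it is missing.

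Two smaller points. First, the Chernoff bound gives $|S|\ge \tfrac12 n^{2/q}$ with high probability, which does not yield the claimed $|S| > n^{2/q}$; one must either increase $\delta$ by a constant factor (and check that the supremum bound survives) or use a restriction argument to pass to a subset of the exact size $\lceil n^{2/q}\rceil$. Second, Bessel's inequality and the containment $B_{q'}\subset L^2$ (with $q'<2$) require care on a locally compact abelian group of infinite Haar measure; the argument really lives on a probability space, as in \cite{Bourgain89} and \cite{GMPT08}, and you should either restrict to that setting or explain the reduction. If your goal is a self-contained proof, you would need to reproduce the analytic core of \cite{Bourgain89} (or Talagrand's majorizing-measure proof); as it stands, the proposal is an accurate table of contents for that proof rather than the proof itself.
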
 

\begin{remark} Note that Theorem \ref{bourgaintheorem} holds automatically for $q=2$.  In this case, $C(2)= 1$ (by orthogonality) for any set $S \subset \{1,2, \dots, n\}$. When $q=\infty$, it is easy to see that  $C(q)\leq |S|^{\frac{1}{2}}$ by Cauchy-Schwarz and the assumption ${||\psi_i||}_{L^{\infty}(G)} \leq 1$. By Riesz-Thorin interpolation theorem applied to the identity operator on the Paley-Wiener subspace of $L^2(G)$ generated by 
${\{\psi_i\}}_{i \in S}$, $C(q) \leq {|S|}^{\frac{1}{2}-\frac{1}{q}}$. The point of Theorem \ref{bourgaintheorem} is that $C(q)$ depends only on $q$ if $S$ is a random subset of $\{1,2, \dots, n\}$ of size $\lceil n^{\frac{2}{q}} \rceil$. \end{remark}

\begin{remark}\label{remarkgeneric}   
The notion of generic in Theorem \ref{bourgaintheorem} means the following. Let $0<\delta<1$ and let ${\{\xi_j\}}_{1 \leq j \leq n}$ denote independent $0,1$ random variables of mean $\int \xi_j(\omega) d\omega=\delta$, $1 \leq j \leq n$. Choosing $\delta=n^{\frac{2}{q}-1}$ generates a random subset $S_{\omega}=\{1 \leq j \leq n: \xi_j(\omega)=1\}$ of $\{1,2, \dots n\}$ of expected size $\lceil n^{\frac{2}{q}} \rceil$. Theorem \ref{bourgaintheorem} holding for a generic set $S$ means that the result holds for the set $S_{\omega}$ with probability $1-o_N(1)$. In simpler language, if we randomly choose a subset of $\{1,2, \dots, n\}$ by choosing each element with probability $p=n^{\frac{2}{q}-1}$, then Theorem \ref{bourgaintheorem} holds for such a set with probability close to $1$. 
\end{remark} 

Theorem \ref{bourgaintheorem} has a built-in uncertainty principle mechanism. Indeed, we have the following theorem, which is the first result of this paper. 

\begin{theorem} \label{bourgainup} Let $G$ be a locally compact abelian group with a Haar measure $\mu$, and let $\Psi=(\psi_1, \dots, \psi_n)$ denote a sequence of $n$ mutually orthogonal functions, with ${||\psi_i||}_{L^{\infty}(G)} \leq 1$. Let $q>2$, and let $S$ be a generic index  subset of $\{1,2, \dots, n\}$ of size $\lceil n^{\frac{2}{q}} \rceil$. Define   $f(x)=\sum_{i \in S} a_i \psi_i(x)$, for almost every  $x\in G$. Suppose that $f$ is $L^2$-concentrated on $E \subset G$ at level $L$. Then 
$$ \mu(E) \ge \frac{1}{{(LC(q))}^{\frac{1}{\frac{1}{2}-\frac{1}{q}}}}.$$ 
\end{theorem}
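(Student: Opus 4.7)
My plan is to combine Bourgain's theorem (Theorem \ref{bourgaintheorem}) with Hölder's inequality on $E$ and the $L^2$-concentration hypothesis. The philosophy is that Bourgain gives a reverse Nikodym/Hölder estimate $\|f\|_{L^q(G)} \le C(q)\|f\|_{L^2(G)}$ for generic index sets, and once one has good control at both $L^2$ and $L^q$, the set $E$ carrying most of the $L^2$-mass cannot be small.

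The steps, in order, are as follows. First, by the concentration hypothesis,
\[
\|f\|_{L^2(G)} \le L\,\|f\|_{L^2(E)}.
\]
Second, apply Hölder's inequality on $E$ with exponents $q/2$ and $q/(q-2)$ to obtain
\[
\|f\|_{L^2(E)} \le \mu(E)^{\frac{1}{2}-\frac{1}{q}}\|f\|_{L^q(E)} \le \mu(E)^{\frac{1}{2}-\frac{1}{q}}\|f\|_{L^q(G)}.
\]
Third, invoke Theorem \ref{bourgaintheorem} for the generic set $S$, together with the orthogonality of the $\psi_i$ (so that $(\sum_{i\in S}|a_i|^2)^{1/2}=\|f\|_{L^2(G)}$, normalizing the $\psi_i$ so that $\|\psi_i\|_{L^2}=1$), to conclude
\[
\|f\|_{L^q(G)} \le C(q)\,\|f\|_{L^2(G)}.
\]
Chaining these three estimates produces
\[
\|f\|_{L^2(G)} \le L\,C(q)\,\mu(E)^{\frac{1}{2}-\frac{1}{q}}\,\|f\|_{L^2(G)}.
\]
Assuming $f\not\equiv 0$ (otherwise the claim is vacuous), one cancels $\|f\|_{L^2(G)}$ and solves for $\mu(E)$, exactly yielding the stated bound.

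The only step that requires any real care is the identification of $\bigl(\sum_{i\in S}|a_i|^2\bigr)^{1/2}$ with $\|f\|_{L^2(G)}$ in Bourgain's theorem; this amounts to assuming the $\psi_i$ are orthonormal (or absorbing the factors $\|\psi_i\|_{L^2}$ into the coefficients $a_i$), which is the natural normalization, for instance when the $\psi_i$ are characters on a compact abelian group. Beyond that normalization issue, the argument is a short, self-contained chain of inequalities with no further obstacles.
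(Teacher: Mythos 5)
Your proof is correct and is essentially the same as the paper's: both chain the concentration hypothesis, H\"older's inequality on $E$ with exponents $q/2$ and $q/(q-2)$ (which the paper phrases as monotonicity of normalized $L^p$ averages over $E$), and Bourgain's bound $\|f\|_{L^q(G)} \leq C(q)\|f\|_{L^2(G)}$, then cancel $\|f\|_{L^2(G)}$. The normalization point you flag (identifying $(\sum_{i\in S}|a_i|^2)^{1/2}$ with $\|f\|_{L^2(G)}$) is real but is glossed over in the paper's proof as well, so your treatment is if anything slightly more careful.
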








\vskip.125in 

In particular, the results stated above hold for ${\mathbb T}^d$, the torus. This case brings us into the realm of Riemannian manifolds. The key assumption in the arguments above is that the orthogonal functions have modulus at most one, which underlines the fundamental difficulty of the manifold setting where the results of this paper will be stated. Recall that if $M$ is a compact Riemannian manifold without a boundary, then the eigenfunctions $\{e_j\}_{j=1}^{\infty}$ of $\sqrt{-\Delta_g}$ form an orthonormal basis for $L^2(M)$. As we shall see, the reason the argument given for locally compact abelian groups above does not transfer directly to the manifold setting is that the eigenfunctions of the Laplacian are not typically uniformly bounded. 

\vskip.125in 




\subsection{The review of basic facts regarding Riemannian manifolds}


Let $M$ be a smooth $d$-dimensional manifold which is compact and without boundary, and let $g$ be a Riemannian metric tensor on $M$. The metric tensor $g$ is locally represented as a real-symmetric, positive-definite $d \times d$ matrix with entries that are smooth functions of $x$,
\[
    g(x) = \begin{bmatrix}
        g_{11}(x) & g_{12}(x) & \cdots & g_{1d}(x) \\
        g_{21}(x) & g_{22}(x) & \cdots & g_{2d}(x) \\
        \vdots & \vdots & & \vdots \\
        g_{d1}(x) & g_{d2}(x) & \cdots & g_{dd}(x)
    \end{bmatrix}
\]
where, if $\partial_i = \frac{\partial}{\partial x_i}$, we have
\[
    g_{ij}(x) = \langle \partial_i, \partial_j \rangle_{g(x)}.
\]

\vskip.125in 

The natural volume density on a Riemannian manifold $(M,g)$ is given locally by 
$$dV_g(x) = |g(x)|^{\frac{1}{2}} dx,$$ where $|g(x)|$ denotes the determinant of $g(x)$.

\vskip.125in 

There is a natural generalization of the Euclidean Laplacian to $(M,g)$, called the Laplace-Beltrami operator, which is expressed in local coordinates by
\[
    \Delta_g f = |g|^{-\frac{1}{2}} \sum_{i,j = 1}^n \partial_i (|g|^{\frac{1}{2}} g^{ij} \partial_j f).
\]
Here $g^{ij}$ are the matrix entries of $g^{-1}$. Indeed, the Laplace-Beltrami operator is defined such that
\begin{align*}\label{def:LBO}
    \int_M (\Delta_g f) h \, dV_g = -\int_M \langle \nabla_g f, \nabla_g h \rangle_g \, dV_g,
\end{align*}
where
\[
    \nabla_g f(x) = \sum_{i,j = 1}^d g^{ij}(x) \partial_i f(x) \partial_j
\]
denotes the gradient of $f$. It follows $\Delta_g$ is both self-adjoint and negative-definite.

\subsection{Laplace-Beltrami eigenfunctions and eigenvalues}

We now summarize some facts about the spectrum of $\Delta_g$. The material is standard and can be found, for example, in \cite{J02}. There exists an orthonormal basis for $L^2(dV_g)$ of eigenfunctions $e_1, e_2, \ldots$ satisfying
\[
    \Delta_g e_j = -\lambda_j^2 e_j, \qquad j = 1,2,\ldots
\]
where $\lambda_j \geq 0$. We are free to select the indices of our eigenfunctions to ensure
\[
    0 = \lambda_1 \leq  \lambda_2 \leq \lambda_3 \leq \cdots \to \infty.
\]
Most of these facts follow by applying the spectral theory of compact self-adjoint operators to the resolvent $(I - \Delta_g)^{-1}$.

\begin{remark} The choice to use $-\lambda_j^2$ as the eigenvalue for $e_j$ is a matter of convention. This allows us to think of $\lambda_j$ as the ``frequency" of $e_j$, or rather the frequency of the standing wave solution
\[
    u(t,x) = \cos(t\lambda_j) e_j(x)
\]
to the wave equation $(\Delta_g - \partial_t^2) u = 0$.
\end{remark}

Given a function $f\in L^2(dV_g)$, the  Fourier coefficients of $f$  are given by $\widehat f(j) = \langle f, e_j \rangle$, the Hermitian inner product of $f$ with $e_j$ in $L^2(dV_g)$, given by 
\[
    \langle f, e_j \rangle = \int_M f(x) \overline{e_j(x)} \, dV_g(x).
\]
Indeed, if $f$ is in $L^2(dV_g)$, then 
\[
    f = \sum_{j = 1}^\infty \widehat f(j) e_j
\]
where both equality and the convergence of the sum hold in $L^2(dV_g)$. If $f$ is smooth enough, the sum converges uniformly and equality holds pointwise.  

\vskip.125in

A fundamental problem in the harmonic analysis on manifolds is to estimate the count of eigenvalues (with multiplicity) under some threshold. That is, we desire nice asymptotics for the Weyl counting function
\[
    N(\lambda) = \#\{j \in \mathbb N : \lambda_j \leq \lambda \}.
\]
The Weyl law gives very general asymptotics
\begin{equation}\label{eq: Weyl law}
    N(\lambda) = (2\pi)^{-d} |M| |B| \lambda^d + O(\lambda^{d-1}),
\end{equation}
where $|M|$ denotes the volume of $M$ and $|B|$ denotes the volume of the unit ball in $\mathbb R^d$. The remainder is sharp in the sense that it cannot be improved for the sphere $S^d$. Many manifolds enjoy an improved Weyl remainder estimate (\cite{DG75}). 

The flat torus $M = \mathbb R^d / {(2\pi \mathbb Z)}^d$ provides a key illustrative example. The Laplace-Beltrami operator  is just $\sum_{\ell=1}^d  \partial
_ \ell^2$. 
We have a complete, $\mathbb Z^d$-indexed set of eigenfunctions
\[
    e_m(x) = (2\pi)^{-\frac{d}{2}} e^{i\langle x, m \rangle}, \qquad m \in \mathbb Z^d
\]
with $m\in \Bbb Z^d$ and $\lambda_m = |m|^2$, respectively. Here, $|m|^2= m_1^2+\cdots+m_d^2$. 

\vskip.125in 

The classical Weyl Law (see e.g. \cite{soggeFIOs}) says that as $\lambda\to \infty$
\[
    N(\lambda) = \#\{m \in \mathbb Z^d : |m| \leq \lambda \} = |B| \lambda^d + O(\lambda^{d-1}),
\]
which coincides with the trivial bound for the Gauss circle problem. 
We will need the {\it local Weyl counting function},


\[
    N_x(\lambda) = \sum_{\lambda_j \leq \lambda} |e_j(x)|^2, \qquad x \in M, \ \lambda \geq 0,
\]
which enjoys similar asymptotics to the Weyl counting function, namely
\[
    N_x(\lambda) = (2\pi)^{-d} |B| \lambda^d + O(\lambda^{d-1}),  \ \ \text{as} \ \lambda\to \infty
\]
where the constants implicit in the big-$O$ remainder are uniform for $x \in M$. We note the pointwise asymptotics imply the Weyl law since $\|e_j\|_{L^2} = 1$ and
\[
    N(\lambda) = \int_M N_x(\lambda) \, dV_g(x).
\]

\subsection{An uncertainty principle for compact manifolds without boundary}

We begin a general technical statement that we shall proceed to clarify in a variety of instances. 


\begin{proposition}\label{prop: l2 manifold uncertainty} 
Let $S$ be a finite subset of the set of eigenvalues of $\sqrt{-\Delta_g}$. Let $X_S=\{j: \lambda_j \in S\}$. Suppose that $f \in L^2(M)$ is not identically $0$ and $f$ is $L^2$-concentrated in $E \subset M$ at level $L$ with respect to the Riemannian volume density (see Definition \ref{def: L^p concentrated}). Suppose also that $\widehat{f}$ is $L^2$-concentrated on $X_S$ at level $L'$ with respect to the counting measure. Then 
\begin{equation} \label{funnyquantity}
        \left( \frac{1}{\# X_S} \sum_{j \in X_S} \frac{1}{|E|} \int_E |e_j(x)|^2 \, dx \right)^{-1} \leq (1 - \epsilon - \epsilon')^{-2} |E| (\# X_S)
    \end{equation}
    where
\begin{equation}\label{eq: L-epsilon}
        L = (1 - \epsilon^2)^{-\frac 1 2} \qquad \text{ and } \qquad L' = (1 - \epsilon'^2)^{-\frac 1 2}.
    \end{equation}
\end{proposition}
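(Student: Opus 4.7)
The plan is to reinterpret the target inequality as a statement about norms of a single operator, and then extract two lower bounds for this operator by unpacking the two concentration hypotheses. Let $P_S \colon L^2(M) \to L^2(M)$ denote the orthogonal projection onto $\mathrm{span}\{e_j : j \in X_S\}$, and let $M_E$ denote multiplication by $\mathbf{1}_E$. The key observation is that the sum appearing in \eqref{funnyquantity} is precisely a squared Hilbert--Schmidt norm: setting $T = M_E P_S$, orthonormality of $\{e_j\}$ gives
\[
    \|T\|_{\mathrm{HS}}^2 \;=\; \sum_{j \in X_S} \|M_E e_j\|_{L^2}^2 \;=\; \sum_{j \in X_S} \int_E |e_j(x)|^2 \, dV_g(x),
\]
so after dividing through by $|E|\cdot \# X_S$ and inverting, the conclusion \eqref{funnyquantity} is equivalent to the single bound $\|T\|_{\mathrm{HS}}^2 \geq (1-\epsilon-\epsilon')^2$.

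To reach this bound, the strategy is to pass through the operator norm via the standard inequality $\|T\|_{\mathrm{op}} \leq \|T\|_{\mathrm{HS}}$, and to lower bound $\|T\|_{\mathrm{op}}$ using the hypotheses on $f$. By the equivalence \eqref{Level-concenteration-equivalent} together with the conversion formula \eqref{eq: L-epsilon}, the $L$-concentration of $f$ on $E$ becomes $\|f - M_E f\|_{L^2(M)} \leq \epsilon\|f\|_{L^2(M)}$, and the Plancherel identity on $M$ converts the $L'$-concentration of $\widehat f$ on $X_S$ into $\|f - P_S f\|_{L^2(M)} \leq \epsilon'\|f\|_{L^2(M)}$. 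A single triangle inequality, using $\|M_E\|_{\mathrm{op}} \leq 1$,
\[
    \|f - Tf\|_{L^2} \;\leq\; \|f - M_E f\|_{L^2} + \|M_E(f - P_S f)\|_{L^2} \;\leq\; (\epsilon+\epsilon')\|f\|_{L^2},
\]
then yields $\|Tf\|_{L^2} \geq (1-\epsilon-\epsilon')\|f\|_{L^2}$, and since $f\not\equiv 0$ this gives $\|T\|_{\mathrm{op}} \geq 1-\epsilon-\epsilon'$.

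Stringing the pieces together, $(1-\epsilon-\epsilon')^2 \leq \|T\|_{\mathrm{op}}^2 \leq \|T\|_{\mathrm{HS}}^2 = \sum_{j\in X_S}\int_E |e_j|^2 \, dV_g$, which is \eqref{funnyquantity} after rearrangement. There is essentially no substantial obstacle here: the entire argument reduces to three lines of operator theory, and the only step requiring a moment of thought is recognizing the quantity in \eqref{funnyquantity} as the squared Hilbert--Schmidt norm of $M_E P_S$. One mild caveat is that the operator-norm lower bound is only informative when $\epsilon+\epsilon' < 1$, which is the only regime in which \eqref{funnyquantity} is a nontrivial constraint.
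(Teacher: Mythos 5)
Your proof is correct and follows essentially the same route as the paper: the paper composes the same two operators ($P_E = M_E$ and $B_S = P_S$), proves your operator-norm lower bound $\|P_E B_S f\|_{L^2} \geq (1-\epsilon-\epsilon')\|f\|_{L^2}$ by the identical triangle-inequality argument, and proves the upper bound $\|P_E B_S f\|_{L^2}^2 \leq \bigl(\sum_{j \in X_S}\int_E |e_j|^2\,dV_g\bigr)\|f\|_{L^2}^2$ by a pointwise Cauchy--Schwarz step that is exactly the proof of your inequality $\|T\|_{\mathrm{op}} \leq \|T\|_{\mathrm{HS}}$ for this operator. Your Hilbert--Schmidt packaging is a slightly more conceptual way of stating the paper's Lemma \ref{lem: lemma 2}, but the substance is the same.
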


\begin{remark}\label{remark: operators}
    This result is valid for general orthogonal systems in $L^2(M)$  and holds without the assumption that the underlying manifold does not have a boundary. See, for example, \cite{Bell1}, \cite{ArieAzita23}, and \cite{Bell2} for some interesting implementations of this viewpoint.  
\end{remark}

\vskip.125in 



Proposition \ref{prop: l2 manifold uncertainty} raises the question of when the left-hand side of (\ref{funnyquantity}) is bounded below by a non-zero constant.  Suppose that for each frequency $\lambda$, the sum
\begin{align}\label{constant-level-function}
    \sum_{j : \lambda_j = \lambda} |e_j(x)|^2 \equiv c \quad \text{a.e.} \ 
    x\in M 
\end{align}
is constant,  and the constant $c:=c(\lambda, M)$   is independent of  $x$. Since the $e_j$'s are all $L^2$-normalized, this condition is equivalent to
\begin{equation} \label{eq: constant l2 condition}
    \sum_{j : \lambda_j = \lambda} |e_j(x)|^2 \equiv \frac{\#\{j : \lambda_j = \lambda \}}{|M|}, \quad \text{a.e.} \ 
    x\in M
\end{equation}
where we count  in \eqref{eq: constant l2 condition}  with   multiplicity.  
Indeed, notice that by  \eqref{constant-level-function}, we have $$\sum_{j : \lambda_j = \lambda} \int_M |e_j(x)| dx = c |M| .$$    On the other hand,   by the $L^2$-normalization of $e_j$, we have 
$$   \sum_{j : \lambda_j = \lambda} \int_M |e_j(x)| dx = \#\{j : \lambda_j = \lambda \}.$$
Putting the last two equations  together, we obtain $c = \frac{\#\{j : \lambda_j = \lambda \}}{|M|}$. Recalling the value of the constant      $c$ from \eqref{constant-level-function},   we complete the proof.

\bigskip

All homogeneous Riemannian manifolds - those whose group of isometries have a transitive action on the points in the manifold - satisfy this condition. In dimension $d=2$, the property \eqref{eq: constant l2 condition} holds if and only if the manifold $M$ is homogeneous \cite{homogeneous}. This question is still open in higher dimensions. We note that flat tori and spheres with the standard metrics are all homogeneous, so  the property  \eqref{eq: constant l2 condition} holds in these cases.  

   Notice that the eigenfunctions on the sphere are not uniformly bounded; however, the relation \eqref{eq: constant l2 condition} holds due to the homogeneity of the sphere. Furthermore, the sphere is not a locally compact abelian group, which makes it a particularly important and interesting object in this context. 
   



\begin{corollary}\label{cor: homogeneous l2 manifold uncertainty}
    Let $(M,g)$ be a compact  Riemannian manifold without boundary satisfying the relation \eqref{eq: constant l2 condition}. Let $E$ be a measurable subset of $M$ and let $S$ be a finite subset of the spectrum of $\sqrt{-\Delta_g}$. Let $X_S=\{j: \lambda_j \in S\}$. Suppose $f$ is $L^2$-concentrated in $E$ at level $L$ with respect to the Riemannian volume measure, and $\widehat f$ is $L^2$-concentrated in $X_S$ at level $L'$ with respect to the counting measure. 
    Then,
    \[
        (1 - \epsilon - \epsilon')^2 \leq \frac{|E|}{|M|} \cdot \# X_S,
    \]
    where $\epsilon$ and $\epsilon'$ are as in \eqref{eq: L-epsilon}.
\end{corollary}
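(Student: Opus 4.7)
The plan is to apply Proposition \ref{prop: l2 manifold uncertainty} directly and then evaluate the apparently awkward quantity on the left-hand side of \eqref{funnyquantity} exactly, using the hypothesis \eqref{eq: constant l2 condition} to collapse it to a constant depending only on $|M|$.

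First, I would partition the index set $X_S$ according to the eigenvalue, writing
\[
    \sum_{j \in X_S} |e_j(x)|^2 = \sum_{\lambda \in S} \sum_{j : \lambda_j = \lambda} |e_j(x)|^2.
\]
By the hypothesis \eqref{eq: constant l2 condition}, the inner sum equals $\#\{j : \lambda_j = \lambda\}/|M|$ for almost every $x \in M$. Summing over $\lambda \in S$ therefore gives $\sum_{j \in X_S} |e_j(x)|^2 = \#X_S / |M|$ almost everywhere. Integrating this identity over $E$ yields
\[
    \frac{1}{\#X_S} \sum_{j \in X_S} \frac{1}{|E|} \int_E |e_j(x)|^2 \, dx = \frac{1}{|M|}.
\]

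Next, I would insert this evaluation into the conclusion \eqref{funnyquantity} of Proposition \ref{prop: l2 manifold uncertainty}. The left-hand side becomes $|M|$, so the proposition reads
\[
    |M| \leq (1 - \epsilon - \epsilon')^{-2} \, |E| \cdot \#X_S,
\]
and rearranging produces the stated inequality $(1 - \epsilon - \epsilon')^2 \leq (|E|/|M|)\cdot \#X_S$.

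There is no substantive obstacle here, as the argument is a direct application of the proposition together with the homogeneity-type assumption; the one point requiring care is the bookkeeping that $X_S$ counts eigenfunctions with multiplicity, so that the collapse of the inner sums in \eqref{eq: constant l2 condition} really produces $\#X_S/|M|$ after summing over $\lambda \in S$.
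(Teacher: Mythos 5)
Your proposal is correct and follows exactly the paper's own route: evaluate the averaged quantity in \eqref{funnyquantity} using \eqref{eq: constant l2 condition} to get $1/|M|$, then rearrange the conclusion of Proposition \ref{prop: l2 manifold uncertainty}. The only difference is that you spell out the multiplicity bookkeeping that the paper leaves implicit, which is a welcome clarification but not a different argument.
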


\vskip.125in 

In the case when the manifold does not satisfy (\ref{eq: constant l2 condition}), we are still able to prove an uncertainty principle that depends on the size of the frequency concentration set $S$. The following corollary is an immediate consequence  of  Proposition \ref{prop: l2 manifold uncertainty}. 

\begin{corollary} \label{cor: size-dependent l2 manifold uncertainty} 
    Assume the hypotheses of Proposition \ref{prop: l2 manifold uncertainty}. Then,
    \begin{equation} \label{equation: classicaluncertaintyonmanifolds}
        (1 - \epsilon - \epsilon')^2
        \leq |E| \left( \sup_{x \in M} \sum_{\lambda_j \in S} |e_j(x)|^2 \right).
    \end{equation} 
\end{corollary}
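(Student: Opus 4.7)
The plan is to derive the bound directly from Proposition \ref{prop: l2 manifold uncertainty} by rearranging and then controlling the average of $|e_j|^2$ over $E$ by a supremum. The corollary is advertised as ``immediate,'' so I expect no substantive obstacle beyond a careful bookkeeping of the arithmetic.

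First, I would take inequality \eqref{funnyquantity} from Proposition \ref{prop: l2 manifold uncertainty} and unpack the left-hand side, which equals
\[
    \frac{\# X_S \cdot |E|}{\displaystyle\sum_{j \in X_S} \int_E |e_j(x)|^2 \, dV_g(x)}.
\]
Dividing both sides of \eqref{funnyquantity} by the positive quantity $|E| \cdot \# X_S$ and then inverting, I obtain
\[
    (1 - \epsilon - \epsilon')^2 \leq \sum_{j \in X_S} \int_E |e_j(x)|^2 \, dV_g(x).
\]
This step requires only that both sides are positive; this is guaranteed since $f$ is not identically $0$ and $|E| > 0$ (both implicit in the hypotheses, as otherwise concentration is vacuous).

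Next, I would exchange the (finite) sum over $j \in X_S$ with the integral over $E$ by Tonelli, writing the right-hand side as
\[
    \int_E \sum_{\lambda_j \in S} |e_j(x)|^2 \, dV_g(x).
\]
Then I would bound the integrand pointwise by its supremum over $M$, yielding
\[
    \int_E \sum_{\lambda_j \in S} |e_j(x)|^2 \, dV_g(x) \leq |E| \cdot \sup_{x \in M} \sum_{\lambda_j \in S} |e_j(x)|^2,
\]
which combines with the previous display to give \eqref{equation: classicaluncertaintyonmanifolds}.

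The only place where one must be a little careful is that the supremum on the right-hand side could in principle be infinite on a noncompact manifold, but since $M$ is compact without boundary and $S$ is finite, the local Weyl asymptotics (and indeed the continuity of each $e_j$) ensure the sum of $|e_j(x)|^2$ is a bounded continuous function of $x$. So the bound is genuinely finite and the inequality is meaningful.
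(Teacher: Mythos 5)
Your proof is correct and matches the paper's intended route: the paper labels the corollary an immediate consequence of Proposition \ref{prop: l2 manifold uncertainty}, whose proof (via Lemmas \ref{lem: lemma 1} and \ref{lem: lemma 2}) already yields exactly your intermediate inequality $(1-\epsilon-\epsilon')^2 \leq \int_E \sum_{\lambda_j \in S} |e_j(x)|^2\, dx$, after which the sup bound is the same trivial step you perform. Your bookkeeping in unpacking \eqref{funnyquantity} and the remarks on positivity and finiteness are all sound.
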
 
Note that  the supremum on $M$ can actually be replaced by the supremum on $E$. 

\vskip.1in 

To be useful,  this corollary requires as input some estimates on the sup-norms of eigenfunctions. For large $\lambda>0$, the general bounds are
\begin{equation} \label{equation: soggezelditch}
    \sup_{x \in M} \sum_{\lambda_j \in [\lambda, \lambda + 1]} |e_j(x)|^2 = O(\lambda^{d-1}) \quad \text{as}  \quad \lambda\to \infty.
\end{equation} 
(see \cite{soggeFIOs} and the references therein).
These bounds are sharp, but can be improved if one both makes some assumptions on the geodesic flow on $M$ and replaces the unit interval $[\lambda, \lambda + 1]$ with an interval $[\lambda, \lambda + o(1)]$ that slowly shrinks in length as $\lambda \to \infty$ (see e.g. \cite{berard,SZDuke}).

\vskip.125in 


Corollary \ref{cor: size-dependent l2 manifold uncertainty} and the bound (\ref{equation: soggezelditch}) lead us to the following result. 

\begin{corollary} \label{theorem: coveringuncertainty}

Assume the hypotheses of Proposition \ref{prop: l2 manifold uncertainty} and suppose that $S$ can be covered by unit intervals $\cup_{k=1}^n [\mu_k, \mu_k+1]$, i.e., 
$S\subset \cup_{k=1}^n [\mu_k, \mu_k+1]$. 
Then 
\begin{equation} \label{equation: generaluncertaintyprinciple} 
    (1 - \epsilon - \epsilon')^2 \leq |E| \cdot C_M \sum_{k=1}^n \mu_k^{d-1}, 
\end{equation} 
where $C_M$ is the constant implicit on the right-hand side of the equation \eqref{equation: soggezelditch}.

\end{corollary}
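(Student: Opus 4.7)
The plan is to simply chain together Corollary \ref{cor: size-dependent l2 manifold uncertainty} with the eigenfunction sup-norm bound \eqref{equation: soggezelditch}, using the covering of $S$ to split the spectral sum into unit-length pieces.

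First, I would invoke Corollary \ref{cor: size-dependent l2 manifold uncertainty}, which under the hypotheses of Proposition \ref{prop: l2 manifold uncertainty} gives
\[
(1 - \epsilon - \epsilon')^2 \leq |E| \left( \sup_{x \in M} \sum_{\lambda_j \in S} |e_j(x)|^2 \right).
\]
All that remains is to control the supremum on the right by $C_M \sum_{k=1}^n \mu_k^{d-1}$.

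Next, using the hypothesis $S \subset \bigcup_{k=1}^n [\mu_k, \mu_k+1]$, I would pointwise bound the spectral sum by the sum over covering intervals (accepting possibly some double-counting of indices whose eigenvalues lie in overlapping intervals, which only weakens the inequality in the right direction):
\[
\sum_{\lambda_j \in S} |e_j(x)|^2 \;\leq\; \sum_{k=1}^n \sum_{\lambda_j \in [\mu_k, \mu_k+1]} |e_j(x)|^2.
\]
Taking the supremum over $x \in M$ and distributing it inside the finite sum over $k$, then applying the bound \eqref{equation: soggezelditch} on each dyadic piece, yields
\[
\sup_{x \in M} \sum_{\lambda_j \in S} |e_j(x)|^2 \;\leq\; \sum_{k=1}^n \sup_{x \in M} \sum_{\lambda_j \in [\mu_k, \mu_k+1]} |e_j(x)|^2 \;\leq\; C_M \sum_{k=1}^n \mu_k^{d-1},
\]
with the same implicit constant $C_M$ as in \eqref{equation: soggezelditch}, uniform in $x$.

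Combining the two displays gives \eqref{equation: generaluncertaintyprinciple}, completing the proof. There is no real obstacle here: the statement is essentially a packaging result, and the only minor point worth flagging is that a single eigenvalue may lie in more than one of the covering intervals, which produces an inequality rather than an equality at the decomposition step, but this goes in the direction needed for an upper bound. Note also, as remarked after Corollary \ref{cor: size-dependent l2 manifold uncertainty}, that the supremum over $M$ could be replaced by a supremum over $E$ without affecting the argument.
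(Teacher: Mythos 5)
Your proof is correct and is exactly the argument the paper intends: the paper offers no separate proof of this corollary, simply stating that it follows from Corollary \ref{cor: size-dependent l2 manifold uncertainty} together with the bound \eqref{equation: soggezelditch}, which is precisely the chaining and unit-interval decomposition you carry out. The point you flag about possible double-counting across overlapping intervals is handled correctly and goes in the harmless direction.
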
 

\vskip.125in 
%

\begin{remark}   
The quantity on the right-hand side of (\ref{equation: generaluncertaintyprinciple}) carries a lot of information. 
Assume that $\lambda_m= \max\{\lambda\in S\}$.
We always have 
\begin{equation} \label{equation: stupidbound} 
    \sum_{k=1}^n \mu_k^{d-1} \leq \# S \cdot   (\lambda_m + 1)^{d-1}, 
\end{equation}
since $\# S\geq n$. 
On the other hand, suppose that $\mu_k \approx 2^k$. Then $\lambda_m\leq   2^{N}$, and we obtain
$$ \sum_{k=1}^n \mu_k^{d-1} \approx \sum_{k=1}^N 2^{k(d-1)} \leq 2^{N(d-1) + 1},$$ which is much better than \eqref{equation: stupidbound}.
\end{remark}

\subsection{Functions on Riemannian manifolds with random spectra}

The following is a manifold analog of Theorem \ref{bourgainup}. 

\begin{theorem} \label{theorem: randomshitonmanifolds} Let $M$ be a compact Riemannian manifold without a boundary, and let $f \in L^2(M)$. Let $\{e_1, e_2, \dots, e_n\}$ denote a subset of the eigenbasis of $\Delta_g$ such that 
$$ {\|e_i\|}_{\infty} \leq B.$$ 
Then there exists $C,c>0$ such that for a generic index subset $I \subset \{1, 2, \dots, n\}$ with 
\[ \left| \# I-   \frac{n}{2}\right|  
 \leq c \sqrt{n}
\]
 such that if 
$$ f(x)=\sum_{i \in I} a_i e_i \quad  \text{is $L^1$-concentrated in} \ E \subset M \ \text{at level} \ A, $$ then 
$$ |E| \ge \frac{1}{C^2 \cdot B^2 \cdot A^2 \cdot \log(n)^2 \cdot {(\log(\log(n))}^5} .$$
\end{theorem}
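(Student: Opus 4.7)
The plan is to combine a Bourgain-type selector theorem for $L^1$ norms of bounded orthonormal systems with the $L^1$-concentration hypothesis. The key input I would invoke is a deep result on random subsystems (the reference \cite{GMPT08} mentioned in the acknowledgments, building on Bourgain's original work): if $\{\psi_1,\dots,\psi_n\}$ is orthonormal in $L^2(M)$ with $\|\psi_i\|_\infty \leq B$, then for a generic subset $I \subset \{1,\dots,n\}$ with $\bigl||I| - \tfrac{n}{2}\bigr| \leq c\sqrt{n}$ one has the uniform lower bound
$$\left\|\sum_{i \in I} a_i \psi_i\right\|_{L^1(M)} \geq \frac{c}{B \log(n)\,(\log\log n)^{5/2}}\left(\sum_{i \in I} |a_i|^2\right)^{1/2}.$$
Applied to the eigenfunctions $\{e_1,\dots,e_n\}$, this produces the random subset $I$ named in the statement; the manifold structure enters only through the pointwise bound $B$, with no assumption on geodesic flow or curvature.

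Granted this selector bound, the remaining deduction is brief. Writing $f = \sum_{i \in I} a_i e_i$, Cauchy--Schwarz on $E$ together with the $L^2$-orthonormality of the $e_i$ gives
$$\|f\|_{L^1(E)} \leq |E|^{1/2}\,\|f\|_{L^2(E)} \leq |E|^{1/2}\,\|f\|_{L^2(M)} = |E|^{1/2}\left(\sum_{i \in I} |a_i|^2\right)^{1/2},$$
while $L^1$-concentration at level $A$ translates, via the equivalent form of Definition \ref{def: L^p concentrated}, into $\|f\|_{L^1(M)} \leq A \|f\|_{L^1(E)}$. Combining these two inequalities with the selector lower bound and cancelling the nonzero factor $\bigl(\sum_{i \in I} |a_i|^2\bigr)^{1/2}$, squaring the result yields
$$|E| \geq \frac{c^2}{A^2 B^2 \log(n)^2 (\log\log n)^5},$$
which is the claimed estimate with $C = 1/c$.

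The principal obstacle is the selector theorem itself, which I would use as a black box. The iterated-logarithm factor $(\log\log n)^{5/2}$ in that theorem is intrinsic to the probabilistic construction (extracting a subset on which the relevant type-$2$ constant is well-controlled), and it is directly responsible for the $(\log\log n)^5$ factor in the conclusion. Once the selector bound is in hand, nothing about the manifold structure beyond orthonormality of the eigenbasis is actually used; the same argument would produce an analogous uncertainty inequality for any uniformly bounded orthonormal system in a finite measure space, which is consistent with Remark \ref{remark: operators}.
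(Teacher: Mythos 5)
Your proposal is correct and follows essentially the same route as the paper: the key input is precisely the Gu\'edon--Mendelson--Pajor--Tomczak-Jaegermann theorem (stated in the paper as Theorem \ref{theorem: GMPT08}, equivalent to your lower bound on the $L^1$ norm via orthonormality), and the deduction via Cauchy--Schwarz on $E$ plus the $L^1$-concentration inequality $\|f\|_{L^1(M)} \leq A\|f\|_{L^1(E)}$ is the same chain of estimates the authors use. No gaps.
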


\vskip.125in 

We now compare Theorem \ref{theorem: randomshitonmanifolds} to the results obtained above, specifically in the case where $S$ is a generic subset of the first $n$ eigenvalues  
\[
    \lambda_1 \leq \lambda_2 \leq \cdots \leq \lambda_n
\]
repeated with multiplicity. The Weyl law \eqref{eq: Weyl law} guarantees positive constants $c$ and $C$ for which 
\[
    c\lambda_n^d \leq n \leq C \lambda_n^d \qquad \text{ for large $n$},
\]
and hence $\lambda_n \approx n^\frac 1 d$. The corresponding eigenfunctions $e_1, \ldots, e_n$ have $L^\infty$ norms uniformly bounded by $O(\lambda_n^\frac{d-1}{2})$, which can often be improved (see e.g. \cite{berard, SZDuke, geodesicbeams}). The lower bound in the theorem then reads as
\begin{equation} \label{eq: randombound}
    |E| \geq \frac 1 {C^2 \cdot A^2 \cdot \lambda_n^{d-1} \cdot \log(\lambda_n)^2\cdot (\log (\log (\lambda_n)))^5},
\end{equation} 
after perhaps adjusting the constant $C$. On the other hand, Corollary \ref{theorem: coveringuncertainty} yields the bound 
\[
    |E| \geq \frac 1 {C^2 \cdot A^2 \cdot \lambda_n^d}
\]
in this situation, which is worse than (\ref{eq: randombound}) by nearly a full power of $\lambda_n$. 

\vskip.25in 

\subsection{Jointly elliptic operators} We now bridge the conceptual gap between the Fourier uncertainty principal on manifolds and on locally compact abelian groups. The torus, being a model member of both settings, will be our focal point.

Let $M$ be a compact smooth manifold without boundary which comes equipped with a volume density which we will simply denote as $dx$. Suppose we have some differential operators $L_1, \ldots, L_m$, which together have the following properties.
\begin{itemize}
    \item[(i)] $L_i$ is self-adjoint in $L^2(dx)$ for each $i$.
    \item[(ii)] $L_i$ and $L_j$ commute for each $i,j$.
    \item[(iii)] $L_1, \ldots, L_m$ are jointly elliptic in the sense that
    \[
        T := I + \sum_{i = 1}^m L_i^2
    \]
    is an elliptic differential operator on $M$.
\end{itemize}

Then, the tuple of operators $L = (L_1, \ldots, L_m)$ admits a discrete orthonormal basis $\{e_j : j \in \mathbb N\}$ of joint eigenfunctions satisfying
\[
    L_i e_j = \lambda_{i,j} e_j \qquad \text{ for each } i \in \{1, \ldots, m\}, \ j \in \mathbb N.
\]
The joint spectrum of this family of operators is
\[
    \{ (\lambda_{1,j}, \ldots, \lambda_{m,j}) \in \mathbb R^m : \ j \in \mathbb N\}.
\]
We we will feel free to replace $\mathbb N$ with any other countably infinite indexing set.

\begin{example}[The torus]
    Consider the flat torus $\mathbb T^d = \R^d / 2\pi \mathbb Z^d$ with operators
    \[
        L_i = \frac 1 i \frac{\partial}{\partial x_i} \qquad \text{ for } i \in \{1, \ldots, d\}.
    \]
    Note $\{L_1, \ldots, L_d\}$ is a jointly elliptic, commuting family of self-adjoint operators as described above, which also admits a joint eigenbasis of Fourier exponentials
    \[
        e_n(x) = (2\pi)^{-\frac d 2} e^{i\langle x, n \rangle} \qquad  n \in \mathbb Z^d.
    \]
    As expected, the joint spectrum is $\mathbb Z^d$.
\end{example}

\begin{example}[Cartesian products of manifolds]
    Let $M_1$ and $M_2$ denote compact Riemannian manifolds without boundary with respective Laplace-Beltrami operators $\Delta_1$ and $\Delta_2$. By an abuse of notation, we consider $\Delta_1$ and $\Delta_2$ as operators acting on functions on $M_1 \times M_2$ in their respective variables. Note, $\{\Delta_1, \Delta_2\}$ is a family of commuting, jointly elliptic, self-adjoint operators. The joint eigenbasis consists of elements, each a tensor product of an eigenfunction on $M_1$ with one on $M_2$. The joint spectrum is $\Lambda_1 \times \Lambda_2$, where $\Lambda_1$ and $\Lambda_2$ are the spectra of $\Delta_1$ and $\Delta_2$, respectively.
\end{example}

\begin{example}[The two-dimensional sphere]
    The classic spherical harmonics $Y_\ell^m$ on $S^2$ are joint eigenbasis elements for family 
    \[
        \left\{ \frac 1 i \frac{\partial}{\partial \theta}, -\Delta_{S^2} \right\},
    \]
    where $\theta$ here denotes the azimuthal angle. The joint spectrum is
    \[
        \{ (m, \ell(\ell+1) ) : |m| \leq \ell \}.
    \]
\end{example}

The statement for Proposition \ref{prop: l2 manifold uncertainty} can be phrased and proved in nearly exactly the same way, with $S$ being a subset of the joint spectrum of a family of operators. 

\begin{proposition}\label{prop: joint}
    Let $M$ be a compact Riemannian manifold with a system of self-adjoint, commuting, jointly-elliptic differential operators $L_1, \ldots, L_m$ with joint eigenbasis $e_j$ for $j \in \mathbb N$ and joint spectrum $\Lambda \subset \R^m$. Let $E \subset M$ be measurable, let $S$ be a finite subset of the joint spectrum $\Lambda$. Let $X_S=\{j: \lambda_j \in S\}$, where $\lambda_j$ is the vector of eigenvalues of $L_1, \dots, L_m$ associated with the eigenfunction $e_j$. Suppose there is a nontrivial $f \in L^2(M)$ that is $L^2$-concentrated in $E$ at level $L$ and whose Fourier coefficients $\widehat f$ are $L^2$-concentrated in $X_S$ at level $L'$. Then 
    \[
        \sum_{j \in X_S} \int_E |e_j(x)|^2 \, dx \geq (1 - \epsilon - \epsilon')^2,
    \]
    where
    \[
         L = (1 - \epsilon^2)^{-\frac 1 2} \qquad \text{ and } \qquad L' = (1 - \epsilon'^2)^{-\frac 1 2}.
    \]
\end{proposition}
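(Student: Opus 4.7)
The plan is to mirror the proof of Proposition \ref{prop: l2 manifold uncertainty}, noting that the only structural feature of the $L^2$-eigenbasis of $\sqrt{-\Delta_g}$ that was actually used is that it forms an orthonormal basis of $L^2(M)$. Under the hypotheses (i)--(iii), the joint eigenfunctions $\{e_j\}_{j \in \mathbb{N}}$ are an orthonormal basis for $L^2(M,dx)$ as well; this is a standard consequence of the joint ellipticity of $T = I + \sum_i L_i^2$ together with the spectral theorem for the commuting self-adjoint family (in particular, $T$ is elliptic and self-adjoint with compact resolvent, and the $L_i$ stabilize its eigenspaces). Once that is in hand, the arithmetic of the proof is insensitive to whether the eigenvalues are scalars or vectors.

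Concretely, introduce the two orthogonal projections on $L^2(M)$:
\[
    P_E f = 1_E f, \qquad P_S f = \sum_{j \in X_S} \widehat f(j)\, e_j.
\]
Using the equivalence (\ref{Level-concenteration-equivalent}) applied in $L^2(M)$ and in $\ell^2(\mathbb{N})$ (via Plancherel, which is valid because $\{e_j\}$ is an orthonormal basis), the concentration hypotheses translate into
\[
    \|(I - P_E) f\|_{L^2} \leq \epsilon \|f\|_{L^2}, \qquad \|(I - P_S) f\|_{L^2} \leq \epsilon' \|f\|_{L^2},
\]
with $\epsilon, \epsilon'$ as in \eqref{eq: L-epsilon}.

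Next, I would apply the triangle inequality to sandwich $f$ between itself and $P_E P_S f$:
\[
    \|f - P_E P_S f\|_{L^2} \leq \|f - P_E f\|_{L^2} + \|P_E(f - P_S f)\|_{L^2} \leq (\epsilon + \epsilon') \|f\|_{L^2},
\]
where the last step uses $\|P_E\|_{\mathrm{op}} \leq 1$. Hence $\|P_E P_S f\|_{L^2} \geq (1 - \epsilon - \epsilon') \|f\|_{L^2}$, which shows that the operator norm of $P_E P_S$ on $L^2(M)$ is at least $1 - \epsilon - \epsilon'$.

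Finally, since the Hilbert--Schmidt norm dominates the operator norm, I would compute the Hilbert--Schmidt norm of $P_E P_S$ in the basis $\{e_j\}$: we have $P_E P_S e_j = 1_E e_j$ for $j \in X_S$ and $0$ otherwise, so
\[
    \|P_E P_S\|_{\mathrm{HS}}^2 = \sum_{j \in X_S} \int_E |e_j(x)|^2 \, dx \geq \|P_E P_S\|_{\mathrm{op}}^2 \geq (1 - \epsilon - \epsilon')^2,
\]
which is the claimed inequality. The only ``hard part'' is really the preliminary observation that the joint eigenbasis is complete in $L^2(M)$; everything downstream is the same bookkeeping as in the single-operator case, and in particular does not require any commutation of $P_E$ and $P_S$ or any additional hypothesis on $M$.
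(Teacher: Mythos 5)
Your proof is correct and follows essentially the same route as the paper, which proves Proposition \ref{prop: joint} by the same two-step argument used for Proposition \ref{prop: l2 manifold uncertainty}: your triangle-inequality bound $\|P_EP_Sf\|_{L^2}\ge(1-\epsilon-\epsilon')\|f\|_{L^2}$ is exactly Lemma \ref{lem: lemma 1}, and your Hilbert--Schmidt estimate is just a repackaging of the Cauchy--Schwarz computation in Lemma \ref{lem: lemma 2}, since $\|P_EP_S\|_{\mathrm{op}}\le\|P_EP_S\|_{\mathrm{HS}}$ is proved by that very computation. Your observation that only orthonormality and completeness of $\{e_j\}$ are used matches the paper's Remark \ref{remark: operators}.
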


\vskip.125in 

The left-hand side can be simplified as in Corollary \ref{cor: homogeneous l2 manifold uncertainty} provided
\begin{equation}\label{eq: joint homogeneous}
    \sum_{j : \lambda_j = \lambda} |e_j(x)|^2 \ \text{is constant in } x 
\end{equation} for each $\lambda$ in the joint spectrum. As we saw earlier in  \eqref{eq: constant l2 condition},   in this case, we have   
$$  
    \sum_{j : \lambda_j = \lambda} |e_j(x)|^2 = \frac{\#\{j : \lambda_j = \lambda\}}{|M|}.
$$  

\begin{corollary}
    Assume the hypotheses of Proposition \ref{prop: joint}, and further assume \eqref{eq: joint homogeneous}. Then,
    \[
        \# X_S \frac{|E|}{|M|} \geq (1 - \epsilon - \epsilon')^2, 
    \]
    where $X_S$ is as described in Proposition \ref{prop: joint}. 
\end{corollary}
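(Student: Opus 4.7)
The plan is to apply Proposition \ref{prop: joint} directly and then simplify its conclusion using the homogeneity hypothesis \eqref{eq: joint homogeneous}. Proposition \ref{prop: joint} already gives
\[
    \sum_{j \in X_S} \int_E |e_j(x)|^2 \, dV_g(x) \geq (1 - \epsilon - \epsilon')^2,
\]
so the entire task reduces to showing that the left-hand side equals $\#X_S \cdot |E|/|M|$.

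First I would reorganize the sum over $X_S$ by joint eigenvalue. Since $X_S = \{j : \lambda_j \in S\}$, partitioning by the value of $\lambda_j$ gives
\[
    \sum_{j \in X_S} |e_j(x)|^2 = \sum_{\lambda \in S} \sum_{j : \lambda_j = \lambda} |e_j(x)|^2.
\]
By the homogeneity assumption \eqref{eq: joint homogeneous}, each inner sum is independent of $x$, and the brief computation already carried out in the excerpt (using the $L^2$-normalization of each $e_j$ and integrating over $M$) identifies the constant as $\#\{j : \lambda_j = \lambda\}/|M|$ almost everywhere. Summing over $\lambda \in S$ then collapses the counts and yields
\[
    \sum_{j \in X_S} |e_j(x)|^2 = \frac{\#X_S}{|M|} \qquad \text{for a.e. } x \in M.
\]

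Finally I would integrate this identity over $E$ to obtain
\[
    \sum_{j \in X_S} \int_E |e_j(x)|^2 \, dV_g(x) = \frac{\#X_S \cdot |E|}{|M|},
\]
and combine with the bound from Proposition \ref{prop: joint} to conclude. There is essentially no obstacle: the corollary is a direct algebraic consequence of the proposition once \eqref{eq: joint homogeneous} is used to make the integrand $x$-independent. The only mild subtlety is that the homogeneity identity holds only almost everywhere, but this is harmless since we integrate against $dV_g$ and the exceptional null set contributes nothing to the integral over $E$.
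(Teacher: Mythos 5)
Your proof is correct and follows exactly the route the paper intends: invoke Proposition \ref{prop: joint}, use \eqref{eq: joint homogeneous} to identify each level sum $\sum_{j:\lambda_j=\lambda}|e_j(x)|^2$ with $\#\{j:\lambda_j=\lambda\}/|M|$, sum over $\lambda\in S$, and integrate over $E$. This mirrors the paper's proof of Corollary \ref{cor: homogeneous l2 manifold uncertainty}, and your remark about the almost-everywhere identity being harmless under integration is a fair point of care.
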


\section{Proofs of main results}

\vskip.125in 

In this section we prove the main results of the paper. 

\subsection{Proof of Theorem \ref{bourgainup}
}

In this subsection, we shall use the notation $|E|=\mu(E)$,  where $\mu$ is the Haar measure on $G$.
By Theorem \ref{bourgaintheorem}, 
\begin{equation} \label{bourgainprep} {\|f\|}_{L^q(G)} \leq C(q) {\|f\|}_{L^2(G)}.  \end{equation}

\vskip.125in 

The left-hand side of (\ref{bourgainprep}) is bounded from below by  

\begin{equation} \label{lhsprep} {\left( \int_E {|f(x)|}^q d\mu(x) \right)}^{\frac{1}{q}}={|E|}^{\frac{1}{q}} \cdot {\left( \frac{1}{|E|} \int_E {|f(x)|}^q d\mu(x) \right)}^{\frac{1}{q}}. \end{equation}  

Since $f$ is assumed to be $L^2$ concentrated on $E$ at level $L$,  the right-hand side of (\ref{bourgainprep}) is bounded from above by    
\begin{equation} \label{rhsprep} C(q) \cdot L \cdot {\left( \int_E {|f(x)|}^2 d\mu(x) \right)}^{\frac{1}{2}}=L \cdot C(q) \cdot {|E|}^{\frac{1}{2}} \cdot {\left( \frac{1}{|E|} \int_E {|f(x)|}^2 d\mu(x) \right)}^{\frac{1}{2}}. \end{equation} 
Plugging (\ref{lhsprep}) and (\ref{rhsprep}) into (\ref{bourgainprep}),    we obtain 
$$ {|E|}^{\frac{1}{q}} \cdot {\left( \frac{1}{|E|} \int_E {|f(x)|}^q d\mu(x) \right)}^{\frac{1}{q}} \leq C(q) L \cdot {|E|}^{\frac{1}{2}} \cdot {\left( \frac{1}{|E|} \int_E {|f(x)|}^2 d\mu(x) \right)}^{\frac{1}{2}}.$$
It follows that 
$$ {|E|}^{\frac{1}{2}-\frac{1}{q}} \ge \frac{1}{LC(q)} \cdot \frac{{\left( \frac{1}{|E|} \int_E {|f(x)|}^q d\mu(x) \right)}^{\frac{1}{q}}}{{\left( \frac{1}{|E|} \int_E {|f(x)|}^2 d\mu(x)\right)}^{\frac{1}{2}}} \ge \frac{1}{LC(q)}.$$
We conclude that 
$$ |E| \ge \frac{1}{{(LC_q)}^{\frac{1}{\frac{1}{2}-\frac{1}{q}}}} ,$$ as desired. 

\vskip.125in 

\subsection{Proof of Proposition \ref{prop: l2 manifold uncertainty}} 
To prove the proposition, we first need to introduce some notation: 

We denote by $P_E$ the cut-off map $f\to 1_E f$,  and by $B_S$ the band-limiting projection  
$$
    B_S(f) = \sum_{\lambda_j \in S} \widehat f(j) e_j = \sum_{\lambda_j \in S} \left( \int_M f(x) \overline{e_j(x)} \, d\mu(x) \right) e_j .
$$
Proposition \ref{prop: l2 manifold uncertainty} follows immediately from the next two lemmas, and the assumption that $P_EB_Sf\neq 0$. As per Remark \ref{remark: operators}, the statements and proofs only require that $\{e_j : j \in \mathbb N\}$ be a set of   orthonormal  functions 
 in   $L^2(M)$.

\begin{lemma}\label{lem: lemma 1}
    Suppose $f$ is a $L^2$-concentrated on $E$ at level $L$ and that $\widehat f$ is $\ell^2$-concentrated on $S$ at level $L'$. Then,
    \[
         (1 - \epsilon - \epsilon') \|f\|_{L^2} \leq \|P_E B_S f\|_{L^2}.
    \]
    where $L = (1 - \epsilon^2)^{-\frac 1 2}$ and $L' = (1 - \epsilon'^2)^{-\frac 1 2}$ as per Definition \ref{def: L^p concentrated}.
\end{lemma}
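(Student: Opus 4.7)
The plan is to follow the classical Donoho--Stark style argument, where one controls how much of $f$ is lost by successively applying the space cutoff $P_E$ and the frequency projection $B_S$, and then invokes the reverse triangle inequality.

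First I would translate both concentration hypotheses into additive error form using the equivalence noted after Definition \ref{def: L^p concentrated}. Writing $L=(1-\epsilon^2)^{-1/2}$ and $L'=(1-\epsilon'^2)^{-1/2}$, the space-side hypothesis gives
\[
    \|f - P_E f\|_{L^2(M)} = \|(I - 1_E)f\|_{L^2(M)} \leq \epsilon \|f\|_{L^2(M)},
\]
while the frequency-side hypothesis, after applying Parseval to pass between $\|f\|_{L^2(M)}$ and $\|\widehat f\|_{\ell^2}$ and using that $B_S$ is the orthogonal projection onto the span of $\{e_j : j \in X_S\}$, yields
\[
    \|f - B_S f\|_{L^2(M)} \leq \epsilon' \|f\|_{L^2(M)}.
\]

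Next I would combine these. Since $P_E$ is multiplication by $1_E$, it is an orthogonal projection and in particular a contraction on $L^2(M)$, so
\[
    \|P_E B_S f - P_E f\|_{L^2(M)} \leq \|B_S f - f\|_{L^2(M)} \leq \epsilon' \|f\|_{L^2(M)}.
\]
The triangle inequality then gives
\[
    \|P_E B_S f - f\|_{L^2(M)} \leq \|P_E B_S f - P_E f\|_{L^2(M)} + \|P_E f - f\|_{L^2(M)} \leq (\epsilon + \epsilon')\|f\|_{L^2(M)}.
\]

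Finally, the reverse triangle inequality delivers the claimed bound:
\[
    \|P_E B_S f\|_{L^2(M)} \geq \|f\|_{L^2(M)} - \|P_E B_S f - f\|_{L^2(M)} \geq (1 - \epsilon - \epsilon')\|f\|_{L^2(M)}.
\]
There is no real obstacle here; the only subtlety to check is that the frequency-side concentration, originally stated with respect to counting measure on $X_S$, translates cleanly into an $L^2(M)$ statement about $f - B_S f$. This is immediate from Parseval because $\{e_j\}$ is an orthonormal system, so it is not necessary that $M$ have no boundary or that the $e_j$ be Laplace eigenfunctions, consistent with Remark \ref{remark: operators}.
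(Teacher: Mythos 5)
Your proof is correct and follows essentially the same route as the paper: both arguments convert the two concentration hypotheses into the additive bounds $\|f-P_Ef\|_{L^2}\le\epsilon\|f\|_{L^2}$ and $\|f-B_Sf\|_{L^2}\le\epsilon'\|f\|_{L^2}$, use that $P_E$ is a contraction to control $\|P_Ef-P_EB_Sf\|_{L^2}$, and finish with the triangle and reverse triangle inequalities. The only difference is cosmetic ordering of the steps, so there is nothing to add.
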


\begin{lemma}\label{lem: lemma 2} With the hypothesis of the previous lemma, we have 
    \[
        \|P_E B_S f\|_{L^2} \leq \left( \int_E \sum_{\lambda_j \in S} |e_j(x)|^2 \, dx \right)^\frac 1 2 \|f\|_{L^2}
    \]

\end{lemma}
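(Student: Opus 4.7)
The plan is to bound $\|P_E B_S f\|_{L^2}$ pointwise by applying Cauchy--Schwarz to the sum that defines $B_S f$, and then use Bessel's inequality to control the Fourier side. Concretely, starting from the definition
\[
    \|P_E B_S f\|_{L^2}^2 = \int_E |B_S f(x)|^2 \, dx = \int_E \left| \sum_{\lambda_j \in S} \widehat{f}(j)\, e_j(x) \right|^2 dx,
\]
I would apply Cauchy--Schwarz on the $\lambda_j \in S$ summation to get the pointwise estimate
\[
    \left| \sum_{\lambda_j \in S} \widehat{f}(j)\, e_j(x) \right|^2 \leq \left( \sum_{\lambda_j \in S} |\widehat{f}(j)|^2 \right) \left( \sum_{\lambda_j \in S} |e_j(x)|^2 \right).
\]

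Next I would pull the (now $x$-independent) factor $\sum_{\lambda_j \in S} |\widehat{f}(j)|^2$ out of the integral over $E$. Since $\{e_j\}$ is orthonormal in $L^2(M)$, Bessel's inequality gives
\[
    \sum_{\lambda_j \in S} |\widehat{f}(j)|^2 \leq \sum_{j=1}^\infty |\widehat{f}(j)|^2 = \|f\|_{L^2}^2.
\]
Combining the two bounds yields
\[
    \|P_E B_S f\|_{L^2}^2 \leq \|f\|_{L^2}^2 \int_E \sum_{\lambda_j \in S} |e_j(x)|^2 \, dx,
\]
and taking square roots gives the claimed inequality.

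There is no real obstacle here; the only subtlety is a bookkeeping one, namely recognizing that the hypotheses on $L^2$- and $\ell^2$-concentration (inherited from the previous lemma) are not actually needed for this bound: the estimate is valid for every $f \in L^2(M)$. Thus the proof reduces to the two standard inputs (Cauchy--Schwarz on the finite sum and Bessel's inequality on the infinite sum), which makes it a clean companion to Lemma \ref{lem: lemma 1}.
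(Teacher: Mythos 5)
Your proof is correct and follows essentially the same route as the paper's: pointwise Cauchy--Schwarz on the sum over $\lambda_j \in S$, pulling out the $x$-independent factor $\sum_{\lambda_j \in S} |\widehat f(j)|^2$, and bounding it by $\|f\|_{L^2}^2$ via Bessel's inequality. Your observation that the concentration hypotheses are not needed for this lemma is also consistent with the paper's argument, which never uses them.
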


\begin{proof}[Proof of \ref{lem: lemma 1}]
    Note
    \[
        \|P_E f\|_{L^2} \leq \|f\|_{L^2} \qquad \text{ and } \qquad \|B_S f\|_{L^2} \leq \|f\|_{L^2} \qquad \text{ for all } f \in L^2.
    \]
    Hence,
    \begin{align*}
        0 &\leq \|f\|_{L^2} - \|P_E B_S f\|_{L^2} \\
        &\leq \|f - P_E B_S f\|_{L^2} \\
        &\leq \|f - P_E f\|_{L^2} + \|P_E f - P_E B_S f\|_{L^2} \\
        &\leq \|f - P_E f\|_{L^2} + \|f - B_S f\|_{L^2}.
    \end{align*}
    By hypothesis, $f$ satisfies
    \[
        \|f - P_E f\|_{L^2} \leq \epsilon \|f\|_{L^2} \qquad \text{ and } \qquad \|f - B_S f \|_{L^2} \leq \epsilon' \|f\|_{L^2},
    \]
    from which the lemma follows.
\end{proof}


\begin{proof}[Proof of Lemma \ref{lem: lemma 2}]
    We have
    \begin{align*}
        \|P_E B_S f\|_{L^2}^2 &= \int \left| 1_E(x) \sum_{\lambda_j \in S} \widehat f(j) e_j(x) \right|^2 \, dx \\
        &\leq \int_E \left( \sum_{\lambda_j \in S} |e_j(x)|^2 \right) \left( \sum_{\lambda_j \in S} | \widehat f(j) |^2 \right) dx \\
        &\leq \left(\int_E \sum_{\lambda_j \in S} |e_j(x)|^2 \, dx \right) \|f\|_{L^2}^2.
    \end{align*}
\end{proof}


\vskip.125in 

\subsection{Proof of Corollary \ref{cor: homogeneous l2 manifold uncertainty}}

\vskip.125in 

\begin{proof}
    From \eqref{eq: constant l2 condition}, we have
    \[
        \frac{1}{\#\{j : \lambda_j \in S\}} \sum_{\lambda_j \in S} \frac 1 {|E|} \int_E |e_j(x)|^2 = \frac{1}{|M|}.
    \]
    The corollary follows immediately from Proposition \ref{prop: l2 manifold uncertainty}.
\end{proof}

\vskip.125in 

\subsection{Proof of Theorem \ref{theorem: randomshitonmanifolds}} We shall need the following result due to Guedon, Mendelson, Pajor, and Tomczak-Jaegermann (\cite{GMPT08}). 

\begin{theorem} \label{theorem: GMPT08} There exist two positive constants $c$ and $C$ such that for any even integer $n$ and any uniformly bounded orthonormal system $(\phi_j)_{j=1}^n$  in $L^2$ with 
\[
\|\phi_j\|_\infty \leq B \quad \text{for } 1 \leq j \leq n,
\]
we can find an index  subset  $I \subset \{1, \ldots, n\}$ with
\[ \left| \# I-   \frac{n}{2}\right|  
 \leq c \sqrt{n}
\]
such that for every $a = (a_i) \in \mathbb{C}^n$,
\begin{equation} \label{equation: GMPT08} {\left\| \sum_{i \in I} a_i \phi_i \right\|}_{L^2} \leq C B \log n (\log \log n)^{5/2} \left\| \sum_{i \in I} a_i \phi_i \right\|_{L^1} \end{equation} 
and
\[
\left\| \sum_{i \notin I} a_i \phi_i \right\|_{L^2} \leq C B \log n (\log \log n)^{5/2} \left\| \sum_{i \notin I} a_i \phi_i \right\|_{L^1}.
\]
\end{theorem}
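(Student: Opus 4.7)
The plan is to construct $I$ probabilistically via independent Bernoulli selectors $\xi_1, \dots, \xi_n$ with $\mathbb{E}\xi_i = 1/2$, setting $I = \{i : \xi_i = 1\}$. The cardinality condition $|\#I - n/2| \leq c\sqrt{n}$ follows immediately from the Chernoff--Hoeffding bound applied to $\sum_i \xi_i$, and holds on an event of probability at least, say, $3/4$ for a universal $c$. Because the desired inequality for $I^c$ is symmetric in $\xi \leftrightarrow 1-\xi$, it suffices to establish \eqref{equation: GMPT08} on a high-probability event; set $K := CB \log n (\log \log n)^{5/2}$ for the target distortion.

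The heart of the proof is the $L^2$-$L^1$ equivalence on vectors supported on $I$. Writing $T_I a := \sum_{i \in I} a_i \phi_i = \sum_i \xi_i a_i \phi_i$, by orthonormality $\|T_I a\|_{L^2} = (\sum_{i \in I}|a_i|^2)^{1/2}$, so \eqref{equation: GMPT08} is equivalent to the uniform lower bound $\inf_{a : \mathrm{supp}(a) \subset I,\,\|a\|_2 = 1} \|T_I a\|_{L^1} \geq 1/K$. Decomposing $\xi_i = 1/2 + \eta_i$ with centered $\eta_i$, the triangle inequality gives
\[
    \|T_I a\|_{L^1} \geq \tfrac{1}{2}\|\textstyle\sum_i a_i \phi_i\|_{L^1} - \|\textstyle\sum_i \eta_i a_i \phi_i\|_{L^1}.
\]
The first term, for all $a$ in the unit ball of $\ell^2$, is bounded below by a universal constant by Khintchine-type lower bounds once the orthogonality of $(\phi_i)$ is invoked via duality. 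Controlling the second term uniformly in $a$ reduces the problem to estimating the Bernoulli empirical process
\[
    \Phi := \mathbb{E} \sup_{\|a\|_2 \le 1}\, \Bigl\| \sum_{i=1}^n \eta_i a_i \phi_i \Bigr\|_{L^1}.
\]

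The main obstacle, and the source of the polylogarithmic constant, is the sharp estimation of $\Phi$. The natural approach is Talagrand's generic chaining (majorizing measure theorem) applied to the Bernoulli process indexed by the unit ball of $\ell^2$, using two metrics: a sub-Gaussian metric $d_2(a,b)^2 = \sum_i (a_i - b_i)^2 \|\phi_i\|_{L^2}^2$ and a sub-exponential metric $d_\infty(a,b) = B \max_i |a_i - b_i|$ reflecting Bernstein-type tails under the uniform bound $\|\phi_i\|_\infty \leq B$. A direct application of Dudley's entropy integral against the $\ell^2$ covering numbers only yields a suboptimal constant of order $B \log^{3/2} n$; extracting $B \log n (\log\log n)^{5/2}$ requires a refined admissible partitioning scheme in which heavy and light coordinates of $a$ are chained separately. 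Once $\Phi$ is controlled in expectation, Talagrand's concentration inequality for empirical processes upgrades this to a high-probability deviation bound, and a union bound with the cardinality event produces a deterministic $I$ satisfying both conclusions of the theorem. The analogous argument for the complement $I^c$ uses the same random construction, since $(1-\xi_i)$ is itself a Bernoulli$(1/2)$ family.
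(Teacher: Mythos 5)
First, a point of order: the paper does not prove Theorem \ref{theorem: GMPT08} at all --- it is quoted verbatim from \cite{GMPT08} and used as a black box in the proof of Theorem \ref{theorem: randomshitonmanifolds}. You are therefore attempting to reprove a substantial theorem of Gu\'edon--Mendelson--Pajor--Tomczak-Jaegermann from scratch, and the attempt has a fatal gap at its central step.

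After writing $\xi_i = \tfrac12 + \eta_i$, you claim that $\bigl\|\sum_i a_i\phi_i\bigr\|_{L^1}$ is bounded below by a universal constant uniformly over the $\ell^2$ unit sphere, ``by Khintchine-type lower bounds.'' This is false for general bounded orthonormal systems: for the trigonometric system $\phi_j(x)=e^{ijx}$ and $a_j=n^{-1/2}$, the sum is $n^{-1/2}$ times a Dirichlet kernel, whose $L^1$ norm is of order $\log n/\sqrt n\to 0$. The failure of the $L^1$--$L^2$ equivalence for the \emph{full} system is precisely the phenomenon the theorem addresses, so no argument that reduces the subset bound to a full-system bound by centering the selectors can succeed. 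Worse, for $a$ supported on $I$ one has $\sum_i a_i\phi_i = T_I a$, so your displayed inequality reads $\|T_Ia\|_{L^1}\ge\tfrac12\|T_Ia\|_{L^1}-\bigl\|\sum_i\eta_i a_i\phi_i\bigr\|_{L^1}$ and is circular. The actual proof in \cite{GMPT08} controls, via the majorizing measure theorem, an empirical process indexed not by the $\ell^2$ ball but by the class of functions $f=\sum_i a_i\phi_i$ with $\|a\|_2=1$ and \emph{small} $L^1$ norm (roughly, one bounds $\sup\sum_i\xi_i|a_i|^2$ over that class and derives a contradiction if the $L^1$ norm on $I$ were too small); that is where the $B\log n(\log\log n)^{5/2}$ factor arises. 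The two-metric chaining ideas in your final paragraph are in the right spirit, but they must be applied to that process, not to the one you set up.
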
 

Moreover, the proof of this result shows that it holds for a generic set $I$ with the stated property. 

\vskip.125in 

To apply this result, consider the collection of eigenfunctions $\{e_1, \dots, e_n\}$, and apply Theorem \ref{theorem: GMPT08} to this collection. Let 
$$ f(x)=\sum_{i \in I} a_i e_i.$$ 

\vskip.125in 

Suppose that $f(x)$ is $L^1$-concentrated in $E \subset M$ at level $A$. Then, using (\ref{equation: GMPT08}), we have 
\begin{align*}
    {\|f\|}_{L^2(E)} &\leq {\|f\|}_{L^2(M)} \\
    &\leq C B \log n (\log \log n)^{\frac{5}{2}} {\|f\|}_{L^1(M)} \\
    &\leq C  B   A  ~  \log n \cdot (\log \log n)^{\frac{5}{2}} \cdot {||f||}_{L^1(E)} ~.
\end{align*}
It follows that 
$$ {|E|}^{\frac{1}{2}} {\left( \frac{1}{|E|} \int_E {|f(x)|}^2 dx \right)}^{\frac{1}{2}}  \leq  CBA  \cdot \log n \cdot (\log \log n)^{\frac{5}{2}} \cdot |E| \cdot \left( \frac{1}{|E|} \int_E |f(x)| dx \right). $$
The conclusion of Theorem \ref{theorem: randomshitonmanifolds} follows by collecting terms, since 
$$\frac{1}{|E|} \int_E |f(x)| dx \leq {\left( \frac{1}{|E|} \int_E {|f(x)|}^2 dx \right)}^{\frac{1}{2}} $$ by Cauchy-Schwarz.

\section{Open problems}

We conclude the paper with the following open problems: 

\begin{problem} One of the key questions left open in this paper is whether the eigenvalue size fundamentally affects the Fourier uncertainty principle on compact Riemannian manifolds. We have shown (Corollary \ref{cor: homogeneous l2 manifold uncertainty}) that if $(M,g)$ be a compact, boundary-less Riemannian manifold satisfying the homogeneity condition \eqref{eq: constant l2 condition}, then if $E$ is a measurable subset of $M$ and $S$ is a finite subset of the spectrum of $\sqrt{-\Delta_g}$, $f$ is $L^2$-concentrated in $E$ at level $L$ and $\widehat f$ is $L^2$-concentrated on $X_S$ at level $L'$, then 
\begin{equation} \label{eq: holygrail} (1 - \epsilon - \epsilon')^2 \leq \frac{|E|}{|M|} \cdot \# X_S, \end{equation} where $L = (1 - \epsilon^2)^{-\frac 1 2}$ and $L' = (1 - \epsilon'^2)^{-\frac 1 2}$. 

\vskip.125in 

The question we ask is, does (\ref{eq: holygrail}) hold for general compact Riemannian manifolds, with or without a boundary? 
\end{problem} 

\vskip.125in 

\begin{problem} Our second question, highly related to the first, is whether there is a meaningful variant of Theorem \ref{bourgaintheorem} in the setting of compact Riemannian manifolds without a boundary. More precisely, let $X=\{e_1, e_2, \dots, e_n\}$ denote a set of $n$ eigenfunctions of the Laplace-Beltrami operator $\Delta_g$ on a compact manifold $M$ without a boundary. 

\vskip.125in 

A somewhat modest formulation of the question we have in mind is the following. Let $S$ be a random subset of $\{1,2 \dots, n\}$ of size $\approx \sqrt{n}$. Is it true that there exists a $q>2$ and a constant $K_p$ independent of $n$ (and possibly depending on the size of $S$) such that for any function $f \in L^2(M)$ of the form $f=\sum_{i\in S} a_i e_i$, 
 \begin{align}\label{q-orthogonality}
 {\left\| \sum_{i \in S} a_i e_i \right\|}_{L^q(M)} \leq K_p {\left( \sum_{i \in S} {|a_i|}^2 \right)}^{\frac{1}{2}},
 \end{align}
with $K_p$ independent of $n$? Notice that the answer to this question is affirmative when $q=2$. However, our question asks  for the existence  of $p>2$ such that   the ``$q$-orthogonality" relation \eqref{q-orthogonality}
holds. A variant of this result is described in Theorem \ref{theorem: randomshitonmanifolds} above. 
\end{problem}

\newpage

\end{document}